\documentclass[11pt, reqno]{amsart}

\usepackage{enumerate}
\usepackage{graphicx,graphics}
\usepackage{amsfonts}
\usepackage{amssymb}
\usepackage{amsthm}
\usepackage{amsmath}
\usepackage{mathrsfs,color}
\usepackage{listings}
\usepackage{orcidlink}
\input{xy}
\xyoption{all}
\usepackage{tcolorbox}

\usepackage[style=numeric,sorting=nyt,giveninits=true,maxnames=99,giveninits=true,url=false]{biblatex}
\DeclareNameAlias{sortname}{family-given}
\DeclareNameAlias{default}{family-given}

\newtheorem{theorem}{Theorem}[section]
\newtheorem{lemma}[theorem]{Lemma}

\newtheorem{remark}[theorem]{Remark}

\newtheorem{proposition}[theorem]{Proposition}
\newtheorem{example}[theorem]{Example}
\newtheorem{definition}[theorem]{Definition}

\begin{document}

\title[Diagonalization of nonlinear functions]{Diagonalization of nonlinear functions in finite-dimensional spaces and the case of $\mathbb R^2$}
 

\author[R. Arnau, J.C. Cortés López, A. González Cortés,
E. A. S\'{a}nchez P\'{e}rez]{Roger Arnau \orcidlink{0000-0003-2544-8875}, Juan Carlos Cortés López \orcidlink{0000-0002-6528-2155}, Álvaro González Cortés \orcidlink{0009-0001-2328-2173} and
Enrique A. S\'{a}nchez P\'{e}rez$^*$ \orcidlink{0000-0001-8854-3154
}}

\address{ Roger Arnau,  Álvaro González Cortés, 
        Enrique \ A.\ S\'{a}nchez P\'{e}rez\\ {*}Corresponding author.
              Instituto Universitario de Matem\'atica Pura y Aplicada,
              Universitat Polit\`ecnica de Val\`encia, Camino de Vera s/n, 46022
              Valencia, Spain.}
              \address{Juan Carlos Cortés López, Instituto Universitario de Matemática Multidisciplinar, Universitat Polit\`ecnica de Val\`encia, Camino de Vera s/n, 46022
              Valencia, Spain.}
            \email{ararnnot@posgrado.upv.es, jccortes@mat.upv.es, agoncor@alumni.upv.es,
            easancpe@mat.upv.es}

\keywords{Nonlinear diagonalization; Eigendecomposition; Coordinate transformation; Function composition}


\thanks{The first author thanks the support of Generalitat Valenciana (Spain), grant number PROMETEO 2024 CIPROM/2023/32.
The third author was supported by a contract of the Programa de Ayudas de Investigación y Desarrollo (PAID-01-24), Universitat Politècnica de València. 
The third author was supported by  the R\&D\&I project/grant PID2022-138342NB-I00 funded by MCIN/ AEI/10.13039/501100011033/  (Spain).
}

\begin{abstract}
This paper introduces a new theoretical framework for the diagonalization of nonlinear functions defined in finite-dimensional real Euclidean spaces. Extending classical results from linear algebra, we provide a unified setting to determine when a nonlinear map can be represented in diagonal form via a change of basis. Due to the complexity of the equations involved, the final part of the paper focuses primarily on the two-dimensional case, for which clear characterizations can be obtained. We also illustrate how these theoretical findings can be applied to classical contexts, such as differential equations, dynamical systems, and the explicit computation of higher-order compositions and inverses of functions.
\end{abstract}

\maketitle

\section{Introduction}
Although linear operators have universal applications in all areas of pure and applied science, the linear structure is not always natural in certain settings. In such cases, it may be more convenient to use less restrictive definitions that do not rely on the linear structure of the spaces. However, some of the properties of linear operators can be preserved when we do not use linearity, and this allows us to retain some of the advantages of linear maps, for example diagonalization.

Diagonalization is a fundamental procedure in linear algebra and has numerous useful applications. Recall that, in a real finite-dimensional setting, a linear function $T\colon\mathbb R^n\to\mathbb R^n$ is said to be diagonalizable if the matrix representation of $T$ with respect to some basis is diagonal. Diagonalizing a linear function can be understood as the process of finding a coordinate system in which the action of $T$ becomes as simple as possible: each basis vector is merely scaled, and no mixing of directions occurs. This ``simplification" is a significant advantage of linear diagonalizable functions, and it is the objective of this study to extend this to nonlinear functions.

While this is a general idea, it must be said that nonlinear diagonalization is a well-known and even common area of study. The question of how to define eigenvectors and eigenvalues for general maps has been addressed from many different perspectives, both theoretical \cite{calanchi,vos} and applied \cite{bun,cho}, and the fundamental aspects are well established. However, in most of these cases, the focus is placed on fixing eigenvalues as numerical scalars and analyzing the corresponding eigenvectors. In the present paper, by contrast, we aim to determine eigenvalues as functions produced by the change of representation of the space induced by the (linear) transformation given by a change of basis. This allows us to study global properties of the operators instead of the usual local linealizations common in dynamical systems such as the classical Hartman–Grobman or Sternberg's theorems (see, for example, \cite{katok2017modern}). In this way, we adopt a practical and concrete approach, offering explicit results that can be readily applied in a range of contexts. This will be illustrated through examples presented at the end of the paper across a variety of settings.

The foundations of the concepts studied in this work were originally presented in \cite{arn}.
There, the lattice Lipschitz inequality $|T(x) - T(y)| \leq_X K |x-y|$ for mappings $T: X \to X$ in certain ordered spaces, is shown to be equivalent to a diagonal property.
An eigendecomposition-type study is relevant in our context, similarly to the lattice Lipschitz case, although the functions that play the role of eigenvalues in our setting do not satisfy any Lipschitz condition.
When the functions $T: \mathbb R^n \to \mathbb R^n$ are of class $\mathcal C^1$, the analysis of the linearization via the Jacobian matrix $DT$ plays a central role.
As in the nonlinear eigenvalue problem studied in \cite{Guttel_2017}, the eigendecomposition depends on the evaluation point of the matrix, a fact that is emphasized in the second part of the work.
In the particular case of the plane, $\mathbb R^2$, we provide explicit algebraic criteria to determine when such diagonalization is possible.

Parallel to theoretical advancements, the past decade has seen numerous practical applications for the diagonalization of nonlinear maps, particularly through the Koopman operator \cite{KoopmanSurvey2024}.
This approach offers a linearization of a finite dimensional dynamical system by embedding it onto an infinite dimensional space.
Several models have been proposed to approximate this transformation using data-driven techniques, including kernel methods, dynamic mode decomposition, or deep neural networks \cite{vinuesa2021koopman, KernelKoopman2025, NeuralDMD2022}.
While our work shares the goal of isolating one-dimensional dynamics, we focus on the explicit criteria that determines when the diagonalization is possible through a change of basis.
 
To do this, we structure the paper as follows. After this introduction, in Section \ref{sec:diag} we present the context of the problem of diagonalization of non-linear functions. To this end, we will generalize known definitions and results from the linear case, showing illustrative examples of this new scenario. In addition, we will study in Section \ref{2dim} our proposal in more depth for the case of the 2-dimensional Euclidean space, where we will obtain complete characterizations. Finally, Section \ref{appl} is devoted to further explore the potential applications of non-linear diagonalization, such as the study of ordinary differential equations. 

We use standard notation from linear algebra and mathematical analysis \cite{ali2,hornmatrix}. Concepts and notations that might be confusing will be explained when they are introduced. 
The finite dimensional space $\mathbb R^n$ is assumed to be endowed with the Euclidean norm. A function  $\Phi: \Omega \subseteq \mathbb R^n \to \mathbb R^n$ is said to be Lipschitz if there exists a constant $K>0$ such that for every $x_1, x_2 \in \mathbb R^n,$ 
 $$
 \big\| \Phi(x_1)- \Phi(x_2) \big\| \le K \| x_1-x_2\|.
 $$
 The infimum of such constants $K$ is the Lipschitz constant of $\Phi$, and it is denoted by $\|\Phi\|_{Lip}$.

\section{General framework for nonlinear diagonalization}\label{sec:diag}

We start this section by introducing the main definitions and concepts related to the diagonalization problem in the nonlinear case. We then explore how some of the well-known results in the linear case can be extended to our nonlinear setting.

Throughout this paper $\Phi$ denotes a function $\Phi\colon\Omega\subseteq\mathbb{R}^n\to\mathbb{R}^n$. We use the usual notation $\Phi(x_1,\ldots,x_n)$ to denote the image of the vector $x\in\Omega$ whose coordinates in the canonical basis $\mathcal C=\{e_1,\ldots,e_n\}$ are $(x_1,\ldots,x_n)$, that is, $$\Phi(x)=\Phi(x_1e_1+\ldots+x_ne_n)=\Phi(x_1,\ldots,x_n).$$ We also write the coordinate $i\in\{1,\ldots,n\}$ of the image of $\Phi$ as $\Phi_i$, so $$\Phi(x)=\Phi_1(x)e_1+\ldots\ldots+\Phi_n(x)e_n=(\Phi_1(x),\ldots,\Phi_n(x)).$$
This notation is also used for any other bases when there is no confusion.
\begin{definition}
    We say that a function $\Phi\colon\Omega\subseteq\mathbb{R}^n\to\mathbb{R}^n$ is \emph{diagonal} if
    $\Phi(x_1,\ldots,x_n)=(\Phi_1(x_1),\ldots,\Phi_n(x_n))$ for any $x\in\Omega$. That is, the $i$-th component of the image of $x$ only depends of the $i$-th coordinate of $x$ in the canonical basis $\mathcal C$. Obviously, if $\Phi$ is a $\mathcal{C}^1$ function, this is equivalent to the Jacobian matrix $D\Phi(x)$ being diagonal.
\end{definition}
As in the linear case, a function $\Phi$ is diagonalizable if there exist a basis $\mathcal B$ of $\mathbb R^n$ in which its expression is diagonal. In this case, the linear isomorphism given by the change of coordinates from the basis $\mathcal B$ to the canonical basis $\mathcal C$ is a key element.
\begin{definition}\label{def:diagbasis}
    A function $\Phi\colon\Omega\subseteq\mathbb{R}^n\to\mathbb{R}^n$ is diagonalizable if there exists a basis $\mathcal B=\{u_1,\ldots,u_n\}$ of $\mathbb R^n$ such that the expression of $\Phi$ in that basis is diagonal. That is, there exist scalar functions $\lambda_1,\ldots,\lambda_n\colon\mathbb R\to\mathbb R$ such that for any $x=a_1u_1+\ldots+a_nu_n\in\Omega$, we can write $\Phi(x)=\lambda_1(a_1)u_1+\ldots+\lambda_n(a_n)u_n$.
\end{definition}

We also propose the following definition, which is more practical and will be the one with which we will mainly work in this paper. Although equivalent to the previous definition, as we demonstrate in the following, this definition is consistent with the concept of conjugacy employed in dynamical systems.

\begin{definition}\label{def:diagiso}
    For $\Phi\colon\Omega\subseteq\mathbb{R}^n\to\mathbb{R}^n$, we say that $\Phi$ is diagonalizable if there exist a linear isomorphism $L$ and a diagonal function $\Psi$ such that $\Phi=L\circ\Psi\circ L^{-1}$. If there is no confusion, we will write the invertible matrix $P$ which represents the linear isomorphism instead of $L$, that is, $\Phi=P\circ\Psi\circ P^{-1}$.
\end{definition}

\begin{proposition}
    Definitions \ref{def:diagbasis} and \ref{def:diagiso} are consistent.
    \begin{proof}
        Consider any function $\Phi$ diagonalizable in the sense of Definition \ref{def:diagbasis}. Denote by $\mathcal B=\{u_1,\ldots,u_n\}$ the basis in which $\Phi$ is diagonal in this sense, and $\lambda_1,\ldots,\lambda_n$ the corresponding scalar functions. Let $P$ be the invertible matrix which gives the change of coordinates from the canonical basis to the basis $\mathcal B$. That is, if we have representations for $x\in\Omega$ as $x=x_1e_1+\ldots+x_ne_n$ and $x=a_1u_1+\ldots+a_nu_n$, then $$P\begin{pmatrix}
            x_1\\\vdots\\x_n
        \end{pmatrix}=\begin{pmatrix}
            a_1\\\vdots\\a_n
        \end{pmatrix}.$$
        In particular, for each $1 \leq i \leq n$ holds $Pe_i=u_i$. Now, note that
        \begin{align*}
            (P^{-1}\circ\Phi\circ P)\left(\sum_{i=1}^nx_ie_i\right)&=(P^{-1}\circ \Phi)\left(\sum_{i=1}^nx_iu_i\right)\\&=P^{-1}\sum_{i=1}^n\lambda_i(x_i)u_i\\&=\sum_{i=1}^n\lambda_i(x_i)e_i.
        \end{align*}
        Therefore, naming $\Psi:=P^{-1}\circ\Phi\circ P$ we have that $\Psi$ is diagonal, and thus $\Phi$ is diagonalizable in the sense of Definition \ref{def:diagiso}.

        Conversely, let $\Phi$ be diagonalizable according to Definition \ref{def:diagiso}, and denote by $P$ the matrix that diagonalizes $\Phi$ and $\Psi$ its diagonal function associated. Consider the basis $\mathcal B=\{u_1,\ldots,u_n\}$ where $u_i=Pe_i$. Then, by a similar argument to the one presented above, we can show that $\Phi(x)=\lambda_1(a_1)u_1+\ldots+\lambda_n(a_n)u_n$, being $x=a_1u_1+\ldots+a_nu_n$ and $\lambda_i:=\Psi_i$.
    \end{proof}
\end{proposition}

Similar to the linear case, we define the equivalent concepts of eigenvector and eigenvalues in the following way.
\begin{definition}
    Consider $\Phi\colon\Omega\subseteq\mathbb{R}^n\to\mathbb{R}^n$. We say that a nonzero vector $u\in\mathbb R^n$ is an eigenvector of $\Phi$ if there exists a scalar function $\lambda\colon\mathbb{R}\to\mathbb{R}$ such that $$\Phi(hu)=\lambda(h)u,$$ for all $h\in\mathbb{R}$ such that $hu\in\Omega$. That is, $\Phi$ is invariant under the direction of $u$, and $\lambda$ modules the scaling in such direction. We call $\lambda$ an eigenfunction of $\Phi$. 
\end{definition}
From this definition it follows that, for a diagonalizable function $\Phi$, the elements of the basis in which $\Phi$ is diagonal are eigenvectors, and the scalar functions associated  (those appearing in Definition \ref{def:diagbasis}) are eigenfunctions. In what follows, we will present some results concerning the eigenvectors of general functions.

\begin{proposition}[Composition results]
Consider $\Phi\colon\Omega\subset\mathbb R^n\to\Omega$ and let $u$ be an eigenvector of $\Phi$ with associated eigenfunction $\lambda$.
\begin{itemize}
    \item[(i)] For each $k\in\mathbb N$, $u$ is an eigenvector of the $k$-fold composition $\Phi^{(k)}$ and its eigenfunction is $\lambda^{(k)}$.
    \item[(ii)] Let $p$ be a polynomial $p(t)=\sum_{j=0}^N\alpha_jt^j$ and define $p(\Phi)(\cdot):= \sum_{j=0}^N\alpha_j\Phi^{(j)}(\cdot)$. Then, $u$ is an eigenvector of $p(\Phi)$ with eigenfunction $p(\lambda)$.
\end{itemize}
    \begin{proof}  
 To prove (i) note that $$\Phi^{(2)}(hu)=\Phi(\Phi(hu))=\Phi(\lambda(h)u)=\lambda(\lambda(h))u=\lambda^{(2)}(h)u.$$
    The desired result can be straightforwardly obtained from here by recursion. Finally, the result of (ii) is a consequence of (i) since
    $$p(\Phi)(hu)=\sum_{j=0}^N\alpha_j\Phi^{(j)}(hu)=\sum_{j=0}^N\alpha_j\lambda^{(j)}(h)u=p(\lambda)(h)u.$$
    \end{proof}
\end{proposition}

For a linear map, it is well known that eigenvectors associated to different eigenvalues are linearly independent. However, this situation may not hold for the linear case. For example, consider $\Phi(x,y)=(x^2+y^2,2xy)$ and note that
\begin{align*}
    \Phi(h(1,0))&=(h^2,0)=h^2(1,0),\\
    \Phi(h(2,0))&=(4h^2,0)= 2h^2(2,0).
\end{align*}
That is, $(1,0)$ and $(2,0)$ are linearly dependent eigenvectors but their respective eigenfunctions are different. However, we can achieve a result similar to that in the linear case by imposing a condition that avoids the previously shown situation.
\begin{proposition}
Suppose that $\Phi\colon\Omega\subset\mathbb R^n\to\mathbb R^n$ has eigenvectors $u,v\in\mathbb R^n$ which eigenfunctions are, respectively, $\mu$ and $\lambda$. If for each nonzero $\alpha\in\mathbb R$, we have $\lambda(h)\alpha\neq\mu(\alpha h)$, for all $h\in\mathbb R$ such that $hu,hv\in\Omega$, then $u$ and $v$ are linearly independent.
    \begin{proof}
        Suppose that there exists $\alpha\in\mathbb R\setminus\{0\}$ such that $v=\alpha u$. Note that
        \begin{equation}\label{eq:phiv}
            \Phi(hv)=\lambda(h)v=\lambda(h)\alpha u,
        \end{equation}
        and
        \begin{equation}\label{eq:phiu}
            \Phi(hv)=\Phi(h\alpha u)=\mu(h\alpha)u.
        \end{equation}
        From \eqref{eq:phiv} and \eqref{eq:phiu} it follows that $\lambda(h)\alpha=\mu(h\alpha) $, which is a contradiction. Therefore, we conclude that $u$ and $v$ are linearly independent.
    \end{proof}
\end{proposition}

\begin{proposition}
Suppose that $\Phi\colon\Omega\subset\mathbb R^n\to\mathbb R^n$ is a diagonalizable function. Then, its eigenfunctions are of the form $\lambda_j(h)=c_jh$, where $c_j\in\mathbb R$, for $j=1,\ldots,n$ if and only if $\Phi$ is linear.
    \begin{proof}
    Since $\Phi$ is diagonalizable, let $u_1,\ldots,u_n$ be the eigenvectors of $\Phi$ which form that basis in which $\Phi$ is diagonal. Consider $\lambda_j$ as the eigenfunction of $u_j$, and consider $a_1,\ldots,a_n\in\mathbb R$. Therefore,
    $$\Phi\left(\sum_{j=1}^n a_ju_j\right)=\sum_{j=1}^n\lambda_j(a_j)u_j=\sum_{j=1}^nc_ja_ju_j=\sum_{j=1}^na_j\lambda_j(1)u_j=\sum_{j=1}^na_j\Phi(u_j),$$
    and consequently $\Phi$ is linear.
    To get the converse result, note that if $\Phi$ is linear and diagonalizable in the sense of our definitions, then its eigenvalues are real numbers $c_1,\ldots,c_n$. So, for its eigenvectors $u_1,\ldots,u_n$ we get $$\Phi(hu_j)=h\Phi(u_j)=hc_ju_j=\lambda_j(h)u_j,$$ for $\lambda_j(h)=c_jh$.
    \end{proof}
\end{proposition}

\begin{proposition}[Inversion results]
For a function $\Phi\colon\Omega\subset\mathbb R^n\to\mathbb R^n,$ the following statements hold:
    \begin{itemize}
        \item[(i)]If there exists an eigenvector $u \in \mathbb{R}^n$ such that its eigenfunction $\lambda$ is not injective on $D := \{h \in \mathbb{R} : h u \in \Omega \}$, then $\Phi$ is not injective either.
        
        \item[(ii)] Suppose that $\Phi$ is invertible on its image and denote its inverse by $\Phi^{-1}\colon\Phi(\Omega)\to\Omega$. If $u\in\mathbb R^n$ is an eigenvector for the eigenfunction $\lambda$, then $\lambda\colon D\to\lambda(D)$ is invertible.
        \item[(iii)] Under the previous assumptions, $u$ is also an eigenvector of $\Phi^{-1}$ for the eigenfunction $\lambda^{-1}$.
    \end{itemize}
    \begin{proof}
        To get (i) let $h_1,h_2\in D$ such that $h_1\neq h_2$ and $\lambda(h_1)=\lambda(h_2)$. Then, 
        $$\Phi(h_1u)=\lambda(h_1)u=\lambda(h_2)u=\Phi(h_2u),$$
        and so $\Phi$ is not injective.
        The result of (ii) is an immediate consequence of (i) and the fact that we have restricted the image of $\Phi$ and $\lambda$ to its ranges.
        Finally, for $t\in\lambda(D)$ there exists $h\in D$ such that $\lambda(h)=t$. By (ii) we get $h=\lambda^{-1}(t)$ and thus $$\Phi^{-1}(tu)=\Phi^{-1}(\lambda(h)u)=\Phi^{-1}(\Phi(hu))=hu=\lambda^{-1}(t)u.$$
    
    \end{proof}
\end{proposition}
The converse of these inversion results does not necessarily hold in general. Consider the function $\Phi(x,y)=(e^x,0)$. Since $\Phi(h(1,0))=(e^h,0)=\lambda(h)(1,0)$, then $(1,0)$ is an eigenvector of $\Phi$ for the injective eigenfunction $\lambda(h)=e^h$. However, $\Phi$ is not injective since $\Phi(a,b)=\Phi(a,c)$ for any $a$ and $b\neq c$. And more obviously, $\Phi$ may be invertible and have no eigenvectors, such as $\Phi(x,y)=(-y,x)$ which inverse is $\Phi^{-1}(x,y)=(y,-x)$.

In general, the relationship between the diagonalization problem and the eigenvectors and eigenfunctions is weaker than in the linear case. In the linear case, if there exists a set of $n$ linearly independent eigenvectors of a function $T$, then it is known that $T$ is diagonalizable in the basis defined by these vectors. However, this is not the case for nonlinear functions $\Phi$, as demonstrated in the following example.
\begin{example}
    Consider $\Phi\colon\mathbb{R}^2\to\mathbb{R}^2$ defined by $\Phi(x,y)=(y^2,xy)$. Note that
    \begin{align*}
        \Phi(h(1,1))&=(h^2,h^2)=h^2(1,1),\\
        \Phi(h(1,-1))&=(h^2,-h^2)=h^2(1,-1).
    \end{align*}
    Now, for the matrix $P=\begin{pmatrix}
        1 & 1\\
        1 & -1
    \end{pmatrix}$ which inverse is $P^{-1}=\frac{1}{2}\begin{pmatrix}
        1 & 1\\
        1 & -1
    \end{pmatrix}$, we observe that $\Psi=P^{-1}\circ\Phi\circ P$ is not diagonal. Indeed, for $(s,t)\in\mathbb{R}^2$, we get
    \begin{align*}
        \Psi(s,t)&=(P^{-1}\circ\Phi\circ P)(s,t)\\&=(P^{-1}\circ\Phi)(s+t,s-t)\\&=P^{-1}\Phi(s+t,s-t)\\&=P^{-1}\begin{pmatrix}
        s^2-2st+t^2\\s^2-t^2
    \end{pmatrix}=\begin{pmatrix}
        s^2-st\\ t^2-st
    \end{pmatrix}.
    \end{align*}
\end{example}

Nevertheless, it is possible to get a converse of this result, as we will show below. The key that relates the diagonality with the eigendecomposition is that the function must vanish at $0\in\mathbb R^n$ (origin preserving). Note that this requirement can be satisfied by any function after a translation $\Psi(x) = \Phi(x) - \Phi(0)$.
This condition can be interpreted in different settings. For example, in dynamical systems it means that 0 is a fixed point of $\Phi$. On the other hand, there are certain classes of mappings of high interest that are origin preserving, such as homogeneous functions with positive degree or, in particular, all linear functions.

\begin{proposition}
    Let $\Phi\colon\Omega\subseteq\mathbb{R}^n\to\mathbb{R}^n $ be a function such that $\Phi(0)=0$. If $\Phi$ is diagonalizable, then there exist, at least, $n$ linearly independent eigenvectors $u_i$ and $n$ eigenfunctions $\lambda_i\colon\mathbb{R}\to\mathbb{R}$, not necessarily different.
\begin{proof}
    Since $\Phi$ is diagonalizable, there exists a basis $\mathcal B=\{u_1,\ldots,u_n\}$ of $\mathbb{R}^n$ such that $\Phi(a_1u_1+\ldots+a_nu_n)=\lambda_1(a_1)u_1+\ldots+\lambda_n(a_n)u_n$. Note that the condition $\Phi(0)=0$ leads to $\lambda_i(0)=0$ for $i=1,\ldots,n$ since $\mathcal B$ is a basis of $\mathbb R^n$. Therefore, fixing $1\leq j\leq n$ and considering $a_i=0$ if $i\neq j$, then
    $$\Phi(a_ju_j)=\lambda_1(0)u_1+\ldots+\lambda_j(a_j)u_j+\ldots+\lambda_n(0)u_n=\lambda_j(a_j)u_j.$$
\end{proof}
\end{proposition}
In the following example, the necessity of the condition $\Phi(0)=0$ will be shown.
\begin{example}
    Consider $\Phi\colon\mathbb{R}^2\to\mathbb{R}^2$ defined as $\Phi(x,y)=(e^y+x,e^y+y)$, where $\Phi(0,0)=(1,1)$. This function is diagonalizable, for example, in the basis $\{(1,1),(1,0)\}$. Indeed, if $P=\begin{pmatrix}
        1 &1\\ 1&0
    \end{pmatrix}$ and so $P^{-1}=\begin{pmatrix}
        0 & 1\\1&-1
    \end{pmatrix},$ then
    $$(P^{-1}\circ\Phi\circ P)(s,t)=P^{-1}\circ \Phi(s+t,s)=P^{-1}\begin{pmatrix}
        e^s+s+t\\ e^s+s
    \end{pmatrix}=\begin{pmatrix}
    e^s+s\\t
    \end{pmatrix}.$$
The method for finding this basis will be explained later and is provided by Theorem \ref{teo:r2char}, but this is not the objective of the current example. Besides, although $(1,1)$ is an eigenvector of $\Phi$ since
    $$\Phi(h(1,1))=(e^h+h,e^h+h)=\lambda_1(h)(1,1), \quad \text{for}  \ \lambda_1(h)=e^h+h,$$
the vector $(1,0)$ is not an eigenvector, because $\Phi(h(1,0))=(1+h,1)$. Nevertheless, although $\Phi$ is not invariant under the direction of $(1,0)$, it is invariant under a parallel direction: $\Phi(h,0)=\Phi(0)+\lambda_2(h)(1,0)$, for $\lambda_2(h)=h$.
\end{example}

To establish the main characterization of diagonalizable functions, let us recall a classic result from linear algebra, see for example \cite{hornmatrix}. 
\begin{lemma}[Theorem 1.3.21 of \cite{hornmatrix}]
    Consider an arbitrary family of diagonalizable matrices $\{A_{\alpha}\}_{\alpha\in\Delta}$. Then, it is a commuting family if and only if it is a simultaneously diagonalizable family, that is, there exists a non singular $P$ such that $PA_{\alpha}P^{-1}$ is diagonal for all $\alpha\in\Delta$.
\end{lemma}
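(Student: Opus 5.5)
We need to prove the lemma as stated: a family $\{A_\alpha\}_{\alpha\in\Delta}$ of diagonalizable matrices commutes if and only if it is simultaneously diagonalizable.

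\emph{The easy direction.} Suppose $PA_\alpha P^{-1}=D_\alpha$ is diagonal for every $\alpha$. Diagonal matrices commute, so
$$A_\alpha A_\beta=P^{-1}D_\alpha D_\beta P=P^{-1}D_\beta D_\alpha P=A_\beta A_\alpha.$$

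\emph{Reduction to a finite family.} For the converse, first pass from an arbitrary index set to a finite one. The linear span of $\{A_\alpha\}$ in $M_n(\mathbb R)$ has dimension at most $n^2$, so we can choose a basis $A_{\alpha_1},\ldots,A_{\alpha_m}$ of it taken from the family. Every $A_\alpha$ is a linear combination of these. Hence any $P$ that diagonalizes all of $A_{\alpha_1},\ldots,A_{\alpha_m}$ also diagonalizes every $A_\alpha$, because a linear combination of diagonal matrices is diagonal. It therefore suffices to treat a finite commuting family.

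\emph{Induction.} We argue by induction on the dimension $n$; the case $n=1$ is trivial.
\begin{itemize}
\item If every matrix of the family is scalar, there is nothing to prove.
\item Otherwise pick a non-scalar $A=A_{\alpha_1}$. Since $A$ is diagonalizable, $\mathbb R^n=\bigoplus_{k} E_{\mu_k}$, where the $E_{\mu_k}$ are its eigenspaces. There are at least two of them, so each has dimension strictly less than $n$.
\item Commutativity makes each eigenspace invariant under every $B$ in the family: if $Av=\mu v$, then $A(Bv)=BAv=\mu Bv$.
\item Restrict the family to each $E_{\mu_k}$. The restrictions still commute.
\end{itemize}

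\emph{The main obstacle.} The key point is that these restrictions remain diagonalizable. I would prove this via the minimal polynomial criterion. A matrix is diagonalizable over $\mathbb R$ if and only if its minimal polynomial splits into distinct real linear factors. The minimal polynomial of $B|_{E}$ divides that of $B$, so it inherits this property. Alternatively, one can argue directly: decompose $v\in E$ into eigenvectors of $B$, and note that each eigencomponent lies in $E$, since the projections are polynomials in $B$ and $E$ is $B$-invariant.

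\emph{Conclusion.} With diagonalizability of the restrictions established, the induction hypothesis applies on each $E_{\mu_k}$ and gives a basis of $E_{\mu_k}$ made of common eigenvectors of all the restricted matrices. Concatenating these bases over $k$ produces a basis of $\mathbb R^n$ consisting of common eigenvectors of the whole family. Taking $P^{-1}$ to be the matrix with these basis vectors as columns, $PA_\alpha P^{-1}$ is diagonal for every $\alpha$, which completes the argument.
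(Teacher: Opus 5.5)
Your proof is correct. The paper gives no proof of its own and only cites Horn--Johnson, and your argument is essentially the standard one found there: induction on $n$, splitting $\mathbb R^n$ into the eigenspaces of a non-scalar member, invariance of these eigenspaces under the commuting family, and diagonalizability of the restrictions via the minimal polynomial. The preliminary reduction to a finite subfamily is harmless but unnecessary, since the induction works verbatim for arbitrary families; and because your argument works over $\mathbb R$ with a real change of basis, it gives exactly the version the paper needs for real Jacobians.
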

Therefore, the characterization to the nonlinear case is as follows
\begin{theorem}\label{theo:simcar}
    Given a function $\Phi\colon\Omega\subseteq\mathbb{R}^n\to\mathbb{R}^n$ being $\mathcal{C}^1$, it is diagonalizable if and only if $\{D\Phi(x)\}_{x\in\Omega}$ is a commuting family of diagonalizable Jacobian matrices.
\end{theorem}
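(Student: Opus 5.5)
The plan is to prove both implications using the chain rule and the lemma on commuting families (Theorem 1.3.21 of \cite{hornmatrix}). For the direct implication, suppose $\Phi=P\circ\Psi\circ P^{-1}$ with $\Psi$ diagonal. Since $\Phi$ is $\mathcal C^1$, so is $\Psi=P^{-1}\circ\Phi\circ P$ on $P^{-1}(\Omega)$. The chain rule then gives
$$D\Phi(x)=P\,D\Psi(P^{-1}x)\,P^{-1}$$
for every $x\in\Omega$. As $\Psi$ is diagonal and $\mathcal C^1$, each $D\Psi(y)$ is a diagonal matrix, $\operatorname{diag}(\Psi_1'(y_1),\ldots,\Psi_n'(y_n))$. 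Every $D\Phi(x)$ is therefore diagonalizable, and all of them are diagonalized by the same $P$. Diagonal matrices commute with one another, so conjugating by $P$ shows that $\{D\Phi(x)\}_{x\in\Omega}$ is a commuting family.

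For the converse, assume $\{D\Phi(x)\}_{x\in\Omega}$ is a commuting family of diagonalizable matrices. The lemma yields a single nonsingular $P$ such that $P^{-1}D\Phi(x)P$ is diagonal for all $x\in\Omega$. Define $\Psi:=P^{-1}\circ\Phi\circ P$ on $P^{-1}(\Omega)$. By the chain rule,
$$D\Psi(y)=P^{-1}D\Phi(Py)P,$$
which is diagonal for every $y$. Consequently $\partial\Psi_i/\partial y_j\equiv 0$ whenever $i\neq j$. It remains to pass from this to the statement that $\Psi_i$ depends only on $y_i$. By the remark in the definition of diagonal function, this gives that $\Psi$ is diagonal, and hence $\Phi=P\circ\Psi\circ P^{-1}$ is diagonalizable in the sense of Definition \ref{def:diagiso}.

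The main obstacle is this last step. A function with vanishing partial derivative in $y_j$ is constant along $y_j$ only on connected slices of the domain. I would argue on segments $t\mapsto y+t e_j$ contained in $P^{-1}(\Omega)$, applying the mean value theorem to $\Psi_i$ along each such segment. This settles the case where $P^{-1}(\Omega)$ is convex, or more generally where every line parallel to a coordinate axis meets it in an interval. For the theorem as stated, I would take $\Omega$ open and convex (for instance $\mathbb R^n$), which is implicit in the paper's remark that diagonality is equivalent to a diagonal Jacobian. I would point out that without such a hypothesis, $\Psi_i$ could depend on other coordinates through disconnected slices, and the equivalence would fail. A secondary point is that the lemma is stated in the form $PAP^{-1}$ diagonal, so one simply replaces $P$ by $P^{-1}$ to match Definition \ref{def:diagiso}.
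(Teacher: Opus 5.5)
Your proof is the argument the paper intends. The paper writes no proof of this theorem; it only combines the simultaneous-diagonalization lemma with the remark, made after the definition of a diagonal function, that a $\mathcal C^1$ map is diagonal iff its Jacobian is diagonal.

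Your domain caveat is correct, and it exposes a gap in the paper's statement even for connected $\Omega$. Take $\Omega=\mathbb R^2\setminus(\{0\}\times(-\infty,0])$ and $\Phi(x,y)=(0,g(y)\mathbf 1_{\{x>0\}})$, where $g(y)=y^2$ for $y<0$ and $g\equiv 0$ on $[0,\infty)$.
\begin{itemize}
\item $\Phi$ is $\mathcal C^1$ and its Jacobian is diagonal everywhere.
\item $D\Phi(1,-1)=\mathrm{diag}(0,-2)$ has distinct eigenvalues, so any diagonalizing basis must be $\{e_1,e_2\}$ up to scaling and order.
\item But $\Phi_2(1,-1)\neq\Phi_2(-1,-1)$, so $\Phi$ is not diagonal in that basis, and hence not diagonalizable.
\end{itemize}

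Your side remark needs one correction. For $n\ge 3$, it is not enough that every axis-parallel line meets $P^{-1}(\Omega)$ in an interval. On $\mathbb R\times((0,1)^2\cup(2,3)^2)$, every such line meets the set in an interval or not at all. Yet the map $(\mathbf 1_{\{y_2>3/2\}},0,0)$ there has zero Jacobian and is not diagonal. What your mean-value argument actually needs is that each slice $\{y\in P^{-1}(\Omega): y_i=c\}$ be connected. This holds for open convex $\Omega$, so your final hypothesis is fine.

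Also state explicitly that ``diagonalizable'' means diagonalizable over $\mathbb R$, so that the lemma returns a real $P$. Otherwise $(x,y)\mapsto(-y,x)$ is a counterexample: its Jacobian is constant and diagonalizable over $\mathbb C$, but the map has no real eigenvectors.
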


In the linear case it is known that the orthogonal diagonalization characterizes the symmetric matrices. The next results explore whether this holds for the nonlinear case.
\begin{proposition}
    Let $\Phi\colon\Omega\subseteq\mathbb{R}^n\to\mathbb{R}^n$ be a $\mathcal{C}^1$ function. Suppose there exists an orthogonal matrix $Q$ that diagonalizes $\Phi$. Then, the Jacobian matrix $D\Phi(x)$ is symmetric for all $x\in\Omega$.
    \begin{proof}
        By hypothesis, the function $\Psi:=Q^t\circ\Phi\circ Q$ satisfies that its Jacobian matrix $D\Psi(s)$ is diagonal for each $s\in Q^t\Omega$. In particular, it is symmetric. Moreover, by the chain rule we get
        $D\Psi(s)=Q^t\Phi(x)Q$, being $x=Qs$, and therefore $D\Phi(x)= QD\Psi(s)Q^t$. So, we conclude that
        $$D\Phi(x)^t=Q D\Psi(s)^t Q^t=QD\Psi(s)Q^t= D\Phi(x),$$
        that is, $D\Phi(x)$ is symmetric for each $x\in\Omega$.
    \end{proof}
\end{proposition}
The converse of this result does not hold in general. There are functions with symmetric Jacobian matrices that are not diagonalizable, so the well-known characterization in the linear case can not be extended to our non-linear setting. Consider, for example, $\Phi\colon\mathbb{R}^2\to\mathbb{R}^2$ defined as $\Phi(x,y)=(x+y,y^2+x)$. Its Jacobian matrix is
$$D\Phi(x,y)=\begin{pmatrix}
    1 & 1\\ 1 & 2y
\end{pmatrix},$$
which is clearly symmetric. However, this family of Jacobian matrices do not commute, and thus $\Phi$ is not diagonalizable by Theorem \ref{theo:simcar}.
The problem here is that, while $D\Phi(x,y)$ is diagonalizable at any $(x,y)$, the eigenvectors differ between points.
Nevertheless, if we assume that $\Phi$ is diagonalizable and its Jacobian is symmetric, then the result holds.
\begin{proposition}
    Let $\Phi\colon\Omega\subseteq\mathbb{R}^n\to\mathbb{R}^n$ be a $\mathcal{C}^1$ function. If $\Phi$ is diagonalizable and $D\Phi(x)$ is symmetric, then there exists an orthogonal matrix $Q$ such that $\Psi:=Q^t\circ\Phi\circ Q$ is diagonal.
\end{proposition}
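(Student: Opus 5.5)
Since $\Phi$ is diagonalizable and $\mathcal C^1$, Theorem \ref{theo:simcar} tells us that $\{D\Phi(x)\}_{x\in\Omega}$ is a commuting family. Each $D\Phi(x)$ is real symmetric by hypothesis. The plan is to upgrade the simultaneous diagonalization of this family to a simultaneous \emph{orthogonal} diagonalization. We then convert it back into a diagonalization of $\Phi$ itself by an orthogonal matrix, repeating the mechanism behind Theorem \ref{theo:simcar}.

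The first step is the linear algebra. A commuting family of real symmetric matrices admits a common orthonormal eigenbasis. I will prove this by induction on $n$.

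If every matrix in the family is a scalar multiple of the identity, any orthonormal basis works. Otherwise pick $A=D\Phi(x_0)$ with at least two distinct eigenvalues. By the spectral theorem, $\mathbb R^n$ is an orthogonal direct sum of the eigenspaces $E_1,\dots,E_k$ of $A$, each of dimension less than $n$. Commutativity gives $D\Phi(x)E_j\subseteq E_j$ for every $x$. The restriction of a symmetric matrix to an invariant subspace is symmetric with respect to the inherited inner product, and the restrictions still commute. Applying the induction hypothesis on each $E_j$ and concatenating the resulting orthonormal bases produces an orthogonal $Q$ with $Q^tD\Phi(x)Q$ diagonal for all $x\in\Omega$.

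The second step passes from Jacobians to $\Phi$. Set $\Psi:=Q^t\circ\Phi\circ Q$, defined on $Q^t\Omega$. By the chain rule, $D\Psi(s)=Q^tD\Phi(Qs)Q$, which is diagonal for every $s$. Hence $\partial\Psi_i/\partial s_j=0$ for $i\neq j$, and by the remark following the definition of diagonal function, $\Psi$ is diagonal.

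I expect the main obstacle to be this last implication. A vanishing partial derivative gives independence of $s_j$ only along segments inside the domain, so it requires $\Omega$ (hence $Q^t\Omega$) to be suitably connected, for instance convex or open and connected. The same implicit assumption is already used in the paper's definition of diagonal function and in Theorem \ref{theo:simcar}, so I would adopt it here as well. A cleaner alternative is to avoid the issue by appealing directly to the proof of Theorem \ref{theo:simcar}. Whatever argument there turns a matrix simultaneously diagonalizing the Jacobians into one diagonalizing $\Phi$ applies verbatim with the orthogonal $Q$ built above.
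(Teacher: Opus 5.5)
Your proposal is correct and follows the paper's route. The paper's proof is a one-line remark that the result follows from the linear case (commuting real symmetric matrices have a common orthonormal eigenbasis) together with Theorem \ref{theo:simcar}, and you have written out the induction and the chain-rule step it leaves implicit. One small correction to your domain remark: being open and connected is not enough for the implication ``$D\Psi$ diagonal $\Rightarrow$ $\Psi$ diagonal'' (on a horseshoe-shaped domain whose slices $\{s\in Q^t\Omega : s_i=c\}$ are disconnected, one can have $\partial\Psi_i/\partial s_j=0$ for all $j\neq i$ without $\Psi_i$ depending only on $s_i$), so the assumption you actually need is that these slices are connected, which convexity of $\Omega$ guarantees.
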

The proof of this result is an immediate consequence of the linear case and Theorem \ref{theo:simcar}.

\section{Complete characterization of the $2-$dimensional case} \label{2dim}

The aim of this section is to provide an alternative and more practical characterization of diagonalizable functions, which also yields an explicit basis with respect to which the function is diagonal. Due to the technical complexity of this result, for the sake of clarity we restrict the results of this section to functions on $\mathbb R^2$.

If $\mathcal B$ is a basis of $\mathbb{R}^2$, we denote by $P_{\mathcal B, \mathcal C}$ the change of basis matrix from the coordinates in $\mathcal B$ to the coordinates in the canonical basis $\mathcal C$, and $P_{\mathcal C, \mathcal B}$ for the inverse change. Recall that $P_{\mathcal C, \mathcal B} = P_{\mathcal B, \mathcal C}^{-1}$. In particular, writing $\mathcal B = \{ v_1, v_2 \} = \big\{ (\alpha_{11}, \alpha_{12}), (\alpha_{21}, \alpha_{22}) \big\}$, we have the change of basis matrix
\begin{equation*}
	P_{\mathcal B, \mathcal C} =
	\begin{pmatrix} \alpha_{11} & \alpha_{21} \\ \alpha_{12} & \alpha_{22} \end{pmatrix}
	\quad \text{and} \quad
	P_{\mathcal C, \mathcal B} = \frac{1}{\det(P_{\mathcal B, \mathcal C})}
	\begin{pmatrix} \alpha_{22} & - \alpha_{21} \\ - \alpha_{12} & \alpha_{11} \end{pmatrix},
\end{equation*}
where $\det(P_{\mathcal B, \mathcal C}) = \alpha_{11} \alpha_{22} - \alpha_{12} \alpha_{21}$.

Consider now the mapping $\Psi = (\Psi_1, \Psi_2) : \Tilde{\Omega} \subseteq \mathbb R^2 \to \mathbb R^2$ to be $\Phi$ in terms of the coordinates of the new basis $\mathcal B$, that is,
\begin{equation*}
	\Psi(s,t) = \Phi( s v_1 + t v_2 ) = \Psi_1(s,t) v_1 + \Psi_2(s,t) v_2.
\end{equation*}
The relation between the definition sets is $\Tilde{\Omega} = P_{\mathcal C, \mathcal B}\Omega = \big\{ (s,t) \in \mathbb R^2 : s v_1 + t v_2 \in \Omega \big\}$.
The relation between the mapping in its different coordinates is given by $\Psi=P_{\mathcal B, \mathcal C}^{-1}\circ\Phi\circ P_{\mathcal B, \mathcal C}$, and so
\begin{equation}
	\label{eq:latlip_Psi_matrix}
	\begin{pmatrix} \Psi_1 \\ \Psi_2 \end{pmatrix} =
	\frac{1}{\det(P_{\mathcal B, \mathcal C})} \begin{pmatrix} \alpha_{22} & - \alpha_{21} \\ - \alpha_{12} & \alpha_{11} \end{pmatrix} \circ
	\begin{pmatrix} \Phi_1 \\ \Phi_2 \end{pmatrix} \circ
	\begin{pmatrix} \alpha_{11} & \alpha_{21} \\ \alpha_{12} & \alpha_{22} \end{pmatrix}.
\end{equation}
Expanding in terms of $\alpha_{ij}$, the previous composition becomes
\begin{equation*}
	\left\{ 
	\begin{aligned}
		\det(P_{\mathcal B,\mathcal C}) \Psi_1(s,t) = & \alpha_{22} \Phi_1( \alpha_{11} s + \alpha_{21} t, \alpha_{12} s + \alpha_{22} t ) \\
		&   - \alpha_{21} \Phi_2( \alpha_{11} s + \alpha_{21} t, \alpha_{12} s + \alpha_{22} t )
		\quad \text{and} \\
		\det(P_{\mathcal B,\mathcal C}) \Psi_2(s,t) = & - \alpha_{12} \Phi_1( \alpha_{11} s + \alpha_{21} t, \alpha_{12} s + \alpha_{22} t ) \\
		&  + \alpha_{11} \Phi_2( \alpha_{11} s + \alpha_{21} t, \alpha_{12} s + \alpha_{22} t ),
	\end{aligned}
	\right.
\end{equation*}
for any $(s,t) \in \Tilde{\Omega}$ or, equivalently, $( \alpha_{11} s + \alpha_{21} t, \alpha_{12} s + \alpha_{22} t ) \in \Omega$.

To establish the main characterization of the diagonalizable functions, we will first present a key lemma  on the existence of a basis in which $\Phi$ is diagonal. The mappings are assumed to be $\mathcal C^1$ and defined on subsets $\Omega \subseteq \mathbb R^2;$ in particular, they can be defined on the whole $\mathbb R^2$.
We denote by $\varphi_{ij}$ the partial derivative of $\Phi_i$ over its $j$-component, i.e. the Jacobian of $\Phi$ is given by
\begin{equation}
	D\Phi (x,y) =
	\begin{pmatrix}
		\nabla \Phi_1(x,y) \\
		\nabla \Phi_2(x,y)
	\end{pmatrix} =
	\begin{pmatrix}
		\varphi_{11}(x,y) & \varphi_{12}(x,y) \\
		\varphi_{21}(x,y) & \varphi_{22}(x,y)
	\end{pmatrix}.
\end{equation}

\begin{lemma}
	\label{lemma:latlip_quadratic_eq}
	Let $\Phi: \Omega \subseteq \mathbb R^2 \to \mathbb R^2$ be a $\mathcal{C}^1$ function.
	Then, there exists a basis of $\mathbb R^2$ such that $\Phi$ is diagonal if and only if the quadratic form equation
	\begin{equation}
		\label{eq:latlip_quadratic_b}
		0 = - \beta_1^2 \varphi_{21}(x,y) + \beta_1 \beta_2 \big( \varphi_{11}(x,y) - \varphi_{22}(x,y) \big)
		+ \beta_2^2 \varphi_{12}(x,y)
	\end{equation}
	has two solutions $( \hat \beta_1, \hat \beta_2 )$ and $( \bar \beta_1, \bar \beta_2 )$ that are independent from $x$ and $y$ and linearly independent.
	
    If this holds, $\Phi$ is diagonal with respect to the basis given by the solutions.
\end{lemma}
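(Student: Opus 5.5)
The plan is to read equation \eqref{eq:latlip_quadratic_b} as an eigenvector condition for the Jacobian, and then to show that diagonality of $\Phi$ in a basis $\mathcal B=\{v_1,v_2\}$ is the same as $v_1,v_2$ being common eigenvectors of all the matrices $D\Phi(x,y)$. This is essentially the $2$-dimensional, explicit version of Theorem \ref{theo:simcar}.

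\textbf{Step 1: the quadratic form detects eigenvectors.} For a nonzero vector $\beta=(\beta_1,\beta_2)$ and a fixed point $(x,y)$, $\beta$ is an eigenvector of $D\Phi(x,y)$ exactly when $D\Phi(x,y)\beta$ is parallel to $\beta$. This in turn holds exactly when the $2\times 2$ determinant with columns $D\Phi(x,y)\beta$ and $\beta$ vanishes. Expanding, that determinant is
$$(\varphi_{11}\beta_1+\varphi_{12}\beta_2)\beta_2-(\varphi_{21}\beta_1+\varphi_{22}\beta_2)\beta_1
=-\beta_1^2\varphi_{21}+\beta_1\beta_2(\varphi_{11}-\varphi_{22})+\beta_2^2\varphi_{12}.$$
So a fixed $\beta\neq 0$ solves \eqref{eq:latlip_quadratic_b} for every $(x,y)\in\Omega$ if and only if $\beta$ is an eigenvector of every $D\Phi(x,y)$.

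\textbf{Step 2: diagonality of $\Psi$ via the chain rule.} Take any basis $\mathcal B=\{v_1,v_2\}$ and write $P=P_{\mathcal B,\mathcal C}$, whose columns are $v_1$ and $v_2$. Set $\Psi=P^{-1}\circ\Phi\circ P$ as in \eqref{eq:latlip_Psi_matrix}. By the chain rule,
$$D\Psi(s,t)=P^{-1}D\Phi(P(s,t))P.$$
This matrix is diagonal exactly when $D\Phi(P(s,t))v_j=d_j(s,t)\,v_j$ for $j=1,2$. As $(s,t)$ ranges over $\tilde\Omega$, the point $P(s,t)$ ranges over all of $\Omega$. Hence $D\Psi$ is diagonal on $\tilde\Omega$ if and only if $v_1$ and $v_2$ are eigenvectors of $D\Phi(x,y)$ for all $(x,y)\in\Omega$.

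\textbf{Step 3: combine the two directions.} If $\Phi$ is diagonal in $\mathcal B$, then $\Psi$ is diagonal, so $D\Psi$ is diagonal. By Steps 2 and 1, $v_1$ and $v_2$ are then two linearly independent solutions of \eqref{eq:latlip_quadratic_b}, and neither depends on $(x,y)$. Conversely, suppose $(\hat\beta_1,\hat\beta_2)$ and $(\bar\beta_1,\bar\beta_2)$ are constant, linearly independent solutions. Take them as $v_1,v_2$, i.e.\ as the columns of $P$. Step 1 makes them common eigenvectors of every $D\Phi(x,y)$, and Step 2 then makes $D\Psi$ diagonal everywhere. This also proves the final sentence of the statement: the basis formed by the solutions is the one that diagonalizes $\Phi$.

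\textbf{Main obstacle.} The delicate point is the last implication: from $D\Psi$ diagonal to $\Psi$ diagonal. Diagonality of $D\Psi$ means $\partial_t\Psi_1=0$ and $\partial_s\Psi_2=0$. One then integrates: $\Psi_1$ is constant along each segment $\{s\}\times I$ lying in $\tilde\Omega$, and similarly for $\Psi_2$. This yields $\Psi_1=\Psi_1(s)$ and $\Psi_2=\Psi_2(t)$ only if the horizontal and vertical sections of $\tilde\Omega$ are connected. I would adopt the same convention already used in the definition of diagonal functions, where a diagonal Jacobian is declared equivalent to diagonality; for instance, take $\Omega$ open and convex, or all of $\mathbb R^2$. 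Under that convention the integration step is immediate, and the rest of the argument is the linear-algebra bookkeeping above.
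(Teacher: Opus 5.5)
Your proof is correct and is essentially the paper's argument. The paper computes $\partial\Psi_1/\partial t$ and $\partial\Psi_2/\partial s$ directly and observes that, up to nonzero factors, they are the quadratic form \eqref{eq:latlip_quadratic_b} evaluated at $v_2$ and $v_1$; these are exactly the off-diagonal entries of $P^{-1}D\Phi P$, which you identify via $\det[D\Phi\,\beta,\beta]$ with the eigenvector condition the paper states right after the lemma. Your caveat about passing from a diagonal $D\Psi$ to a diagonal $\Psi$ is well taken: the paper writes ``equivalently'' at that step without comment, so it tacitly needs the same domain assumption (connected horizontal and vertical sections of $\tilde\Omega$, e.g.\ $\Omega$ open and convex).
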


\begin{proof}
	Following the notation introduced before, the mapping $\Phi$ is diagonal in the basis $\mathcal B = \{ v_1, v_2 \} = \big\{ (\alpha_{11}, \alpha_{12}), (\alpha_{21}, \alpha_{22}) \big\}$ if and only if $\Psi_1(s,t)$ only depends on $s$ and $\Psi_2(s,t)$ on $t$.
	Equivalently, 
	\begin{equation}
		\label{eq:latlip_lemma_partials_0_proof}
		\dfrac{ \partial \Psi_1(s,t) }{ \partial t } = 0 \quad \text{and} \quad \dfrac{ \partial \Psi_2(s,t) }{ \partial s } = 0, \quad \forall (s,t) \in \Tilde{\Omega},
	\end{equation}
	which can be written in terms of $\Phi$ as
	\begin{equation*}
		\left\{
		\begin{aligned}
			0 & = \alpha_{22}^2 \varphi_{12}(x,y) + \alpha_{21} \alpha_{22} \big( \varphi_{11}(x,y) - \varphi_{22}(x,y) \big) - \alpha_{21}^2 \varphi_{21}(x,y), \\
			0 & = - \alpha_{12}^2 \varphi_{12}(x,y) + \alpha_{11} \alpha_{12} \big( \varphi_{22}(x,y) - \varphi_{11}(x,y) \big) + \alpha_{11}^2 \varphi_{21}(x,y),
		\end{aligned}
		\right.
	\end{equation*}
	for all $(x,y) = ( \alpha_{11} s + \alpha_{21} t, \alpha_{12} s + \alpha_{22} t ) \in \Omega$.
	Note that both equations represent the same quadratic equation \eqref{eq:latlip_quadratic_b}.
    
    Therefore, if $\Phi$ is diagonal in the basis $\mathcal B$, then $( \alpha_{11}, \alpha_{12} )$ and $( \alpha_{21}, \alpha_{22} )$ are two solutions of \eqref{eq:latlip_quadratic_b}.
	As \eqref{eq:latlip_lemma_partials_0_proof} is satisfied for any $(s,t) \in \Tilde{\Omega}$, \eqref{eq:latlip_quadratic_b} holds for any $(x,y) \in \Omega$. The linear independence of the solutions is consequence of the fact that $\mathcal B$ is a basis.
	
	Conversely, if two linearly independent solutions $( \hat \beta_1, \hat \beta_2 )$ and $( \bar \beta_1, \bar \beta_2 )$ exist, one can consider the basis $\mathcal B =  \big\{ ( \hat \beta_1, \hat \beta_2 ), ( \bar \beta_1, \bar \beta_2 ) \big\}$, which clearly satisfies \eqref{eq:latlip_lemma_partials_0_proof}, and so $\Phi$ has a diagonal representation in $\mathcal B$. Then, the corresponding $\Psi$ is diagonal.
\end{proof}

In other words, previous lemma states that there must exists two directions in the plane that are invariant under the Jacobian matrix $D\Phi(x,y)$ at every point.

Note that if $\Phi$ is diagonal with respect to a basis $\mathcal B = \{ v_1, v_2 \}$, any other basis formed by multiples of it such as $\mathcal D = \{ \lambda v_1, \mu v_2 \}$ with $\lambda, \mu \neq 0$ also satisfies the condition.
Denoting by $\hat \Psi : \Tilde \Omega \subseteq \mathbb R^2 \to \mathbb R^2$ the functions \eqref{eq:latlip_Psi_matrix} with respect to $\mathcal D$, we have that 
\begin{equation*}
	\hat \Psi_1 (s) = \lambda \Psi_1 \left( \frac{s}{\lambda} \right), \quad \text{and} \quad
	\hat \Psi_2 (t) = \mu \Psi_2 \left( \frac{t}{\mu} \right), \quad \forall (s,t) \in \Tilde\Omega.
\end{equation*}
Therefore, we will assume in some of the following cases that the vectors have norm one. In what follows, we will omit the variables $(x,y)$ when referring to $\varphi_{i,j}$, since the dependence is clear.

Let us present the main characterization of diagonalizable functions in $\mathbb R^2$.
\begin{theorem}\label{teo:r2char}
	\label{teo:latlip_car}
	Let $\Phi : \Omega \subseteq \mathbb R^2 \to \mathbb R^2$ be a $\mathcal C^1$ function. It is diagonalizable in some basis of $\mathbb R^2$ if and only if
	\begin{equation}
		\label{eq:P_discriminant}
		( \varphi_{11} - \varphi_{22} )^2 + 4 \varphi_{21} \varphi_{12} \geq 0
	\end{equation}
	and
	\begin{equation}\label{eq:abconstants}
		\begin{aligned}
			A & = \frac{ ( \varphi_{11} - \varphi_{22} )^2 + 2 \varphi_{21}^2 + 2 \varphi_{21} \varphi_{12} } { ( \varphi_{11} - \varphi_{22} )^2 + ( \varphi_{12} + \varphi_{21} )^2 },
			\\
			B & = \frac{ ( \varphi_{11} - \varphi_{22} )^4 + 4 ( \varphi_{11} - \varphi_{22} )^2 \varphi_{21} \varphi_{12} } { \big( ( \varphi_{11} - \varphi_{22} )^2 + ( \varphi_{12} + \varphi_{21} )^2 \big)^2 }
		\end{aligned}
	\end{equation}
	are constant numbers (independent from $x$ and $y$), except when the denominator is $0$ ($\varphi_{1,1} = \varphi_{2,2}$ and $\varphi_{1,2} = \varphi_{2,1} = 0$).
\end{theorem}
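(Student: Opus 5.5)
The plan is to reduce everything to Lemma \ref{lemma:latlip_quadratic_eq}. The quadratic form \eqref{eq:latlip_quadratic_b} vanishes exactly on the directions $\beta$ with $D\Phi(x,y)\beta\parallel\beta$, i.e. on the eigendirections of the Jacobian. So $\Phi$ is diagonalizable iff, at every point, this form factors into two real linear factors whose zero lines $\ell_1,\ell_2$ are the same for all $(x,y)\in\Omega$. The exception is at points where all coefficients vanish: there $D\Phi$ is a scalar matrix and every direction is admissible.

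Real factorization at a point holds exactly when the discriminant of \eqref{eq:latlip_quadratic_b} is nonnegative. That discriminant is $(\varphi_{11}-\varphi_{22})^2+4\varphi_{12}\varphi_{21}$, which gives condition \eqref{eq:P_discriminant}. Assuming it, write the zero lines as the directions of angles $\theta_1,\theta_2$. Then
\[
-\beta_1^2\varphi_{21}+\beta_1\beta_2(\varphi_{11}-\varphi_{22})+\beta_2^2\varphi_{12}
= k\,(\beta_1\sin\theta_1-\beta_2\cos\theta_1)(\beta_1\sin\theta_2-\beta_2\cos\theta_2)
\]
for a scalar $k=k(x,y)$. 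Comparing coefficients, with $\sigma=\theta_1+\theta_2$ and $\delta=\theta_1-\theta_2$:
\[
\varphi_{21}=-k\sin\theta_1\sin\theta_2,\qquad \varphi_{11}-\varphi_{22}=-k\sin\sigma,\qquad \varphi_{12}=k\cos\theta_1\cos\theta_2,
\]
hence $\varphi_{12}+\varphi_{21}=k\cos\sigma$. The common denominator in \eqref{eq:abconstants} is therefore exactly $k^2$, vanishing only in the excluded scalar case. Substituting, $A$ and $B$ become functions of the angles alone. Short trigonometric computations should give
\[
A=\sin^2\sigma-2\sin\theta_1\sin\theta_2\cos\sigma,\qquad B=\sin^2\sigma\,\sin^2\delta.
\]
The key step is the observation that $A$ and $B$ depend only on the unordered pair of lines $\{\ell_1,\ell_2\}$, not on $k$.

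The direction ``diagonalizable $\Rightarrow$ $A,B$ constant'' is then immediate. By Lemma \ref{lemma:latlip_quadratic_eq}, the lines $\ell_1,\ell_2$ are fixed (hence real, giving \eqref{eq:P_discriminant}), so $\theta_1,\theta_2$ are constant at all non-degenerate points, and so are $A$ and $B$.

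For the converse, assume \eqref{eq:P_discriminant} and that $A,B$ are constant off the degenerate set. I would invert the map $\{\theta_1,\theta_2\}\mapsto(A,B)$. Using $2\sin\theta_1\sin\theta_2=\cos\delta-\cos\sigma$, $A$ becomes $\sin^2\sigma+\cos^2\sigma-\cos\delta\cos\sigma = 1-\cos\delta\cos\sigma$. Together with $B=\sin^2\sigma\sin^2\delta$, this gives two equations in $\cos\sigma,\cos\delta$. They should determine $(\cos\sigma,\cos\delta)$ up to finitely many choices, hence the pair of lines up to finitely many candidates.

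This inversion step is the main obstacle. There is a possible discrete ambiguity (e.g. swapping the roles of $\sigma$ and $\delta$, or sign choices), and it must be excluded or handled. I would handle it by continuity: the eigenlines depend continuously on $D\Phi$, which is continuous since $\Phi\in\mathcal C^1$. A continuous map from a connected set of non-degenerate points into a finite set of line pairs is constant. This tacitly uses connectedness of $\Omega$, which I would make explicit. The degenerate points ($k=0$) impose no constraint, since every direction satisfies \eqref{eq:latlip_quadratic_b} there. Finally, having a fixed pair of lines satisfying \eqref{eq:latlip_quadratic_b} everywhere, Lemma \ref{lemma:latlip_quadratic_eq} yields the diagonalizing basis. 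If $\ell_1=\ell_2$ (zero discriminant with $k\neq0$), the lemma's independence requirement fails, and that subcase must be ruled out separately.
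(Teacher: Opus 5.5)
Your reduction is essentially the paper's. At a point where $D\Phi$ is not scalar, both arguments show $A=\sin^2\theta_1+\sin^2\theta_2$ and $B=(\sin^2\theta_1-\sin^2\theta_2)^2$, where $\theta_1,\theta_2$ are the angles of the root lines of \eqref{eq:latlip_quadratic_b}. Your factorization reaches this more cleanly than the paper's squared equation in $t=\sin^2\zeta$, your formulas $A=1-\cos\sigma\cos\delta$ and $B=\sin^2\sigma\sin^2\delta$ are correct, and the necessity direction is fine.

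The gap is in the converse, and it cannot be closed. The two points you defer, ruling out $\ell_1=\ell_2$ and fixing the discrete choice by continuity, are exactly where the statement as written is false. The paper's proof passes over the same two points. For $B=0$ it tacitly assumes both $\theta$ and $\pi-\theta$ solve the unsquared equation, and it asserts the sign choice is constant ``by continuity'' without any connectedness.

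For the first point, $(A,B)$ cannot distinguish a double line at angle $\theta_0$ from the symmetric pair $\{\theta_0,\pi-\theta_0\}$: both give $A=2\sin^2\theta_0$, $B=0$. Concretely, take the linear map $\Phi(x,y)=(2x-y,x)$. It has $\varphi_{11}=2$, $\varphi_{12}=-1$, $\varphi_{21}=1$, $\varphi_{22}=0$. So the left side of \eqref{eq:P_discriminant} is $0$, the common denominator is $4$, and $A=1$, $B=0$ everywhere. Yet $D\Phi$ has characteristic polynomial $(\lambda-1)^2$ and is not the identity. It is a Jordan block, so $\Phi$ is not diagonalizable.

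For the second point, connectedness of $\Omega$ does not make the non-degenerate set connected. The degenerate points impose no constraint themselves, but they can separate regions carrying different members of your finite list of line pairs. Take $\alpha,\beta$ smooth, not identically zero, supported in $[1,2]$ and $[3,4]$ respectively, and set $\Phi(x,y)=(x+\alpha(y)+\beta(y),\,y+\alpha(y)-\beta(y))$ on $\Omega=\mathbb R^2$. Then $\varphi_{11}=1$, $\varphi_{12}=\alpha'+\beta'$, $\varphi_{21}=0$, $\varphi_{22}=1+\alpha'-\beta'$. The left side of \eqref{eq:P_discriminant} is $(\alpha'-\beta')^2\ge 0$, and at every non-degenerate point $A=1/2$, $B=1/4$. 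But the eigenlines of $D\Phi$ are $\{(1,0),(1,1)\}$ where $\alpha'\neq 0$, and $\{(1,0),(1,-1)\}$ where $\beta'\neq 0$. No fixed pair works, so by Lemma~\ref{lemma:latlip_quadratic_eq} $\Phi$ is not diagonalizable.

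The sufficiency argument therefore needs extra hypotheses. One workable pair is the following.
\begin{enumerate}[(i)]
\item Strict inequality in \eqref{eq:P_discriminant} wherever $D\Phi$ is not scalar. This excludes Jordan blocks, and in the case $B=0$ it forces the pair $\{\theta_0,\pi-\theta_0\}$.
\item Connectedness of the set where $D\Phi$ is not scalar. This is what your continuity argument actually uses when $B>0$.
\end{enumerate}
Under these, your plan goes through as written.
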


\begin{proof}
	By the previous comment, we assume without loss of generality that the basis $\mathcal B = \{ v_1, v_2 \}$ is formed by the norm one vectors $v_1 = ( \cos \theta, \sin \theta )$ and $v_2 = ( \cos \xi, \sin \xi )$, with $\theta, \xi \in [ 0, \pi [$ and $\theta \neq \xi$.
	Therefore, by Lemma \ref{lemma:latlip_quadratic_eq} and rewriting \eqref{eq:latlip_quadratic_b} in terms of $\mathcal B$, $\Phi$ is diagonal in some basis (that would be $\mathcal B$) if and only if
	\begin{equation}
		\label{eq:teo_quadratic_zeta}
		\sin \zeta \cdot \cos \zeta \cdot (\varphi_{11} - \varphi_{22}) = (\cos^2 \zeta - \sin^2 \zeta) \cdot \varphi_{21}
	\end{equation}
	has two different solutions in $[ 0, \pi [$, which we call $\theta$ and $\xi$.
	
	By squaring both sides of \eqref{eq:teo_quadratic_zeta} and denoting by $t = \sin^2 \zeta$, the equation becomes $P(t) = 0$, where $P(t)$ is the second degree polynomial
	\begin{equation}
		\label{eq:P_t}
        \begin{aligned}
		      P(t) & = \big( ( \varphi_{11}  - \varphi_{22} )^2 + ( \varphi_{12} + \varphi_{21} )^2 \big) \cdot t^2 \\
            & + \big( - ( \varphi_{11} - \varphi_{22} )^2 - 2 \varphi_{21}^2 - 2 \varphi_{21} \varphi_{12} \big) \cdot t + \varphi_{21}^2.
        \end{aligned}
	\end{equation}
	Then, the solutions for $t$ (assuming a non-negative discriminant and that the denominator is not $0$) are given by\small
	\begin{equation}
		\label{eq:P_roots}
		t=\frac{ ( \varphi_{11} - \varphi_{22} )^2 + 2 \varphi_{21}^2 + 2 \varphi_{21} \varphi_{12} \pm \sqrt{ ( \varphi_{11} - \varphi_{22} )^4 + 4 \varphi_{21} \varphi_{12} ( \varphi_{11} - \varphi_{22} )^2 } }
		{ 2 ( \varphi_{11} - \varphi_{22} )^2 + 2 ( \varphi_{12} + \varphi_{21} )^2 }.
	\end{equation}
\normalsize
    \textit{(Necessity):}
	Assume now that $\Phi$ is diagonal with respect to a basis $\mathcal B$ as before.
	If $\theta \neq \pi - \xi$, as $\sin^2 \theta \neq \sin^2 \xi$, there are two different roots of $P(t)$ which are independent from $x$ and $y$.
	As a consequence, the polynomial has positive discriminant so \eqref{eq:P_discriminant} is satisfied.
	Calling $t_1$ and $t_2$ the result of equation \eqref{eq:P_roots}, $A = t_1 + t_2$ and $B = (t_1 - t_2)^2$ are constant numbers.
	
	In the case that $\theta = \pi - \xi$, we have that $\sin \theta = \sin \xi$ and $\cos \theta = - \cos \xi$.
	Thus, $\cos^2 \theta \cdot \varphi_{21} - \sin^2 \theta \cdot \varphi_{21} = \cos^2 \xi \cdot \varphi_{21} - \sin^2 \xi \cdot \varphi_{2,1}$ and so by equation \eqref{eq:teo_quadratic_zeta},
	\begin{equation}
		\label{eq:sin_cos_vphi_0}
		\sin \theta \cdot \cos \theta \cdot (\varphi_{11} - \varphi_{22}) = 0 = \sin \xi \cdot \cos \xi \cdot (\varphi_{11} - \varphi_{22}).
	\end{equation}
	But $\sin \theta$ and $\cos \theta$ cannot be both $0$, leading to $\varphi_{11} = \varphi_{22}$.
	Therefore, formulas of \eqref{eq:P_roots} are the same (unique) solution of $P(t) = 0$.
	The solution is $t = \sin^2 \theta = \sin^2 \xi = A$ and $B = 0$, giving as before independent values from $x$ and $y$.
	Moreover, from $0 \leq A \leq 1$ it can be derived that $\varphi_{12} \varphi_{21} \geq - \varphi_{11}^2$ and $\varphi_{12} \varphi_{21} \geq - \varphi_{11}^2$.
	As $\varphi_{12} \neq - \varphi_{21}$, we conclude that $\varphi_{12} \varphi_{21} \geq 0$ and so \eqref{eq:P_discriminant} is satisfied.
	  
    \textit{(Sufficiency):}
	Conversely, assume now that \eqref{eq:P_discriminant} is satisfied and $A$ and $B$ are constant real numbers for any $x$ and $y$ in wich it can be computed.
	We exclude the case where $\varphi_{11} = \varphi_{22}$ and $\varphi_{12} = \varphi_{21} = 0$ for any possible $x$ and $y$, since it represents the case of already diagonal expression on the canonical basis, so we can assume that $A$ and $B$ are well defined.
	Note that if we are able to find two different solutions $\theta, \xi \in [ 0, \pi [$ for \eqref{eq:teo_quadratic_zeta}, we will have the basis $\mathcal B$ defined in which $\Phi$ is diagonal by reasoning as in Lemma \ref{lemma:latlip_quadratic_eq}.
	
	As $B \geq 0$, we can define
	\begin{equation}\label{eq:t12}
		t_1 = \frac{A + \sqrt{B}}{2} \quad \text{and} \quad t_2 = \frac{A - \sqrt{B}}{2}.
	\end{equation}
	First of all, we have to see that both values belong to $[0,1]$.
	Completing the square,
	\begin{equation}
		\sqrt{B}
		\leq \frac{ ( \varphi_{11} - \varphi_{22} )^2 + 2 \varphi_{12} \varphi_{21} } { ( \varphi_{11} - \varphi_{22} )^2 + ( \varphi_{12} + \varphi_{21} )^2 },
	\end{equation}
	and hence $t_1 \geq t_2 \geq 0$, and $t_2 \leq t_1 \leq 1$.
	
	We start by studying the case in which $B = 0$.
	In this case, $P(t)$ has only one solution $t_0 = A / 2$, so for the existence of different $\theta$ and $\xi$ solutions of the equation as 
    $$
    \sin^2 \theta = t_0 = \sin^2 \xi,
    $$
    we need that $t_0 < 1$.
	
    Assume by way of contradiction that $A = 2$ and fix $(x,y)$ such that 
    $$
    \varphi_{11} \neq \varphi_{22}  \quad \text{or} \quad \varphi_{12} \neq 0 \quad \text{or} \quad \varphi_{21} \neq 0.
    $$
	Note that $A = 2$ implies 
    $$
    ( \varphi_{11} - \varphi_{22} )^2 + 2 \varphi_{12}^2 + 2 \varphi_{12} \varphi_{21} = 0.
    $$
	As $B = 0$, 
    $$
    \varphi_{11} = \varphi_{22} \quad \text{or} \quad ( \varphi_{11} - \varphi_{22} )^2 + 4 \varphi_{12} \varphi_{21} = 0.
    $$
	Each case implies that $\varphi_{12} = - \varphi_{21}$ and $\varphi_{12} = \varphi_{21}$, respectively, both leading  to $\varphi_{12} = \varphi_{21} = 0$ and hence, to  a contradiction.
	
	When $B \neq 0$, $t_1 \neq t_2$ are two different solutions of $P(t) = t$, so different values for $\theta$ and $\xi$ can be determined being solutions of \eqref{eq:teo_quadratic_zeta}.
	Each direction is given by 
    $$
    \sin \theta = \sqrt{t_1}, \quad \cos \theta = \pm \sqrt{1 - t_1}, \quad \theta \in [0,\pi[,
    $$
    where the sign of the $\cos \theta$ is determined by \eqref{eq:teo_quadratic_zeta}.
	Note also that it is the same sign for any $(x,y)$ by continuity.
	The same process determines the value of $\xi$.
\end{proof}

Let us present a comprehensive example in which we study the diagonalization problem of a nonlinear function $\Phi\colon\mathbb R^2\to\mathbb R^2$ and find the corresponding basis.

\begin{example}\label{ex:diagex}
Consider the $\mathcal C^1$ function
\begin{equation}\label{eq:phiex}
    \Phi(x,y)=(2e^{-x+y}-x+2y, e^{-x+y}-x+2y),
\end{equation}
which Jacobian matrix is $$D\Phi(x,y)=\begin{pmatrix}
    \varphi_{11}(x,y) & \varphi_{12}(x,y)\\
    \varphi_{21}(x,y) & \varphi_{22}(x,y)
\end{pmatrix}=\begin{pmatrix}
    -2e^{-x+y}-1 & 2e^{-x+y}+2\\
    -e^{-x+y} -1 & e^{-x+y} +2
\end{pmatrix}.$$
After some elementary calculations we can find that the expression defined in \eqref{eq:P_discriminant} is positive since $(\varphi_{11}-\varphi_{22})^2+4\varphi_{21}\varphi_{12}=e^{-2x}(e^x+e^y)^2$. Besides, we can get that the expressions $A$ and $B$ defined in \eqref{eq:abconstants} are, respectively, the constants 7/10 and 9/100. Consequently, the function $\Phi$ is diagonalizable by Theorem \ref{teo:r2char}.

To find a basis in which $\Phi$ is diagonal, formed by the unitary vectors $(\cos\theta_1,\sin\theta_1)$ and $(\cos\theta_2,\sin\theta_2)$, recall that the values $t_1$ and $t_2$ defined as in \eqref{eq:t12} gives $t_i=\sin^2\theta_i$. In this case we get $$t_1=\frac{A+\sqrt{B}}{2}=\frac{1}{2},\quad t_2=\frac{A-\sqrt{B}}{2}=\frac{1}{5}.$$
Therefore, the desired basis is $\left\{(1/\sqrt{2}, 1/\sqrt{2}), (2/\sqrt{5}, 1/\sqrt{5})\right\}$.
\end{example}

Some particular cases of the previous theorem include the following examples, illustrating well-known structural properties of certain nonlinear maps.

\begin{example}
	Assume that $\Phi\colon\Omega\subseteq\mathbb R^2\to\mathbb R^2$ is a function such that the denominators of $A$ and $B$ are $0$ for any $(x,y)$, that is, 
	\begin{equation}
		\label{eq:t_denominator_0}
		\varphi_{11} = \varphi_{22} \quad \text{and} \quad \varphi_{12} = - \varphi_{21}.
	\end{equation}
	Therefore, the polynomial \eqref{eq:P_t} becomes $P(t) = \varphi_{21}^2$, which has no solution unless $\varphi_{21} = 0$.
	Then, for $\Phi$ being diagonal must hold $\varphi_{21}=\varphi_{12}=0$.
	
	
	Moreover, $\frac{\partial \Psi_1}{\partial x}(x) = \frac{\partial \Psi_2}{\partial y}(y)$ for any $(x,y) \in \Omega$ (omitting the irrelevant variables), and so $\Psi_1(x) = a x + b$ and $\Psi_2(y) = a y + c$ with $a, b, c \in \mathbb R$ are affine functions with the same slope.
	Then, $\Phi(x,y) = a (x,y) + (b,c)$ and, in fact, $\Phi$ is diagonal with respect to any basis of $\mathbb R^2$.
    
    This example is notable because the equations obtained in \eqref{eq:t_denominator_0} are the Cauchy-Riemann equations, characterising the holomorphic functions in $\mathbb{C}$. Consequently, the only complex differentiable functions that are diagonalizable are the affine functions.
	
	Observe also that the linear case associated to \eqref{eq:t_denominator_0} is the classical problem of diagonalization of the matrix
	\begin{equation}
		M = \begin{pmatrix}
			a & b \\ -b & a
		\end{pmatrix},
	\end{equation}
	a well-known example of non-diagonalizable matrix (in real terms) for $b \neq 0$ and diagonal with respect to any basis when $b = 0$.
\end{example}

\begin{example}
	Another important case is when
	\begin{equation}
		\label{eq:t_phi_A1_B0}
		\varphi_{11} = \varphi_{22} \quad \text{and} \quad \varphi_{12} = \varphi_{21}.
	\end{equation}
	This implies that the Jacobian matrix of $\Phi$ is symmetric and therefore, there exists $f : \Omega \subseteq \mathbb R^2 \to \mathbb R$ differentiable such that $\Phi = \nabla f$.
	Moreover, as $\frac{\partial^2 f}{\partial x^2} = \frac{\partial^2 f}{\partial y^2}$, we have that there exist $g \in\mathcal C^2(\Omega_1)$ and $h \in\mathcal C^2(\Omega_2)$ such that
	\begin{equation}
		f(x,y) = g(x+y) + h(x-y).
	\end{equation}
	The mapping $\Phi$ is then diagonal with respect to the vectors $(1,1)$ and $(1,-1)$.
	
	The same result can be seen as a consequence of Theorem \ref{teo:latlip_car}.
	As $A = 1$ and $B = 1$, the mapping is diagonal with respect to the basis 
    $$
    \{ ( 1/\sqrt{2}, 1/\sqrt{2} ), (-1/\sqrt{2}, 1/\sqrt{2}) \}.
    $$
	
\end{example}

\section{Applications} 
\label{appl}

As in the linear case, finding a diagonal form for a mapping by means of a change of basis provides an interesting advantage for explicit analytical computations of certain fundamental processes for applications. In this section, we proceed to demonstrate three such applications.

\subsection{High-order composition and the exponential of non-linear functions}

Diagonalization for a general function $\Phi\colon\Omega\subseteq\mathbb R^n\to\Omega$ 
is a powerful tool to compute high-order compositions, since $\Phi^{(k)}=P\circ\Psi^{(k)}\circ P^{-1}$ where $P$ is the matrix of the isomorphism to get the diagonal form $\Psi$.
And since $\Psi$ is diagonal, the problem is reduced to compute $k$ high-order composition but from simpler functions of one variable. In this direction, this section provides the first application of our results. 

\begin{example}\label{ex:compex}
Consider $\Omega=\{(x,y)\in\mathbb R^2:y\neq x\}$ and $\Phi\colon\Omega\to\mathbb R^2$
defined as
\begin{equation}\label{eq:ex}
    \Phi(x,y)= \left( 4x-2y+\frac{1}{y-x},4x-2y+\frac{2}{y-x} \right)
\end{equation}
It can be checked that $\Phi(\Omega)\subset\Omega$ and this function is diagonalizable by Theorem \ref{teo:r2char}. One basis in which $\Phi$ is diagonal is given by $\{(1,1),(1,2)\}$, being the diagonal expression $\Psi(s,t)=(2s,1/t)$ with variables relation $(s,t)=(2x-y,-x+y)$. If $(x,y)\in\Omega$ then observe that $t\neq 0$ and hence $\Psi$ is well defined.

The $k$-fold composition of $\Psi$ can be easily derived as
$$\Psi^{(k)}(s,t)=(2^ks, t^{(-1)^k}),$$
and therefore
    \begin{align*}
        \Phi^{(k)}(x,y)&=(P\circ\Psi^{(k)}\circ P^{-1})(x,y)\\ &=(P\circ\Psi^{(k)})(2x-y,-x+y)\\ &=P\circ\begin{pmatrix}
        2(2x-y) \\  \tfrac{1}{y-x}
    \end{pmatrix}^{(k)}\\&=P\circ\begin{pmatrix}
        2^k(2x-y)\\  (y-x)^{(-1)^k}
    \end{pmatrix}\\&=\begin{pmatrix}
        2^k(2x-y)+(y-x)^{(-1)^k}\\
        2^k(2x-y)+2(y-x)^{(-1)^k}
    \end{pmatrix}.
    \end{align*}
\end{example}

As a particular case, we find the computation of the exponential function. 
Given a matrix $A\in\mathbb R^{n\times n}$, recall that its exponential can be defined as the power series $$e^A=\sum_{k=0}^\infty \frac{1}{k!}A^k.$$
In this way, the exponential of a linear function can be defined. Now we are interested in defining an equivalent concept for a general function $\Phi\colon\Omega\subseteq\mathbb R^n\to\Omega$. We define $e^\Phi\colon\Omega\subseteq\mathbb R^n\to\mathbb R^n$ as $$e^\Phi(\cdot)=\sum_{k=0}^\infty \frac{1}{k!}\Phi^{(k)}(\cdot),$$
where $\Phi^{(k)}$ denotes the $k$-fold composition. We will also consider the function $E_\Phi\colon\Omega\times\mathbb R\to\mathbb R^n$ defined as
\begin{equation}\label{eq:expxi}
    E_{\Phi}(x,\xi)=\sum_{k=0}^\infty \frac{\xi^k}{k!}\Phi^{(k)}(x).
\end{equation}
Clearly, $E_\Phi(x,0) = x$ due to the usual convention $\Phi^{(0)}=Id$, and also yields $E_\Phi(x,1) = e^\Phi(x)$. Some basic examples of this function are the following, where $e^t$ denotes the usual exponential of real numbers.
\begin{enumerate}[(i)]
    \item For the identity function $\Phi(x) = x$, we get $E_\Phi(x,\xi) = e^\xi x$.
    \item More in general, when $\Phi(x) = \mu x$ for $\mu \in \mathbb R$, yields $E_\Phi(x,\xi) = e^{\mu \xi} x$.
    \item If $\Phi(x) = x + x_0$ for $x_0 \in \mathbb R^n$, then $E_\Phi(x,\xi) = e^{\xi} (x+\xi x_0)$.
    \item For a linear map $\Phi(x) = A x$, where $A$ is a real $n \times n$ matrix, our definition of the exponential function matches the well-known matrix exponential: $E_\Phi(x,\xi) = e^{A \xi} x$.
\end{enumerate}

The following results will cover the study of the convergence of this series.
\begin{proposition}For $\Phi\colon\Omega\subset\mathbb R^n\to\Omega$ consider the series defined in \eqref{eq:expxi}. Then, for each fixed $x\in\Omega$, the series converges for every $\xi\in\mathbb R$ such that $|\xi|<R(x)$, where $R(x):=\liminf_{k} \left(\frac{k!}{\|\Phi^{(k)}(x)\|}\right)^{\frac{1}{k}}.$
\begin{proof}
    For a fixed $x\in\Omega$ consider the real series whose general term is the norm of the general term of \eqref{eq:expxi}, that can be written as $\sum_{k=0}^\infty a_k|\xi|^k$ with $a_k=\|\Phi^{(k)}(x)\|/k!$. By the root test for power series, it converges for $\xi\in\mathbb R$ such that $|\xi|<R$, being $R$ the radius of convergence that can be computed as
    $$|\xi|<R:=\liminf_k|a_k|^{-1/k}=\liminf_k\left(\frac{k!}{\|\Phi^{(k)}(x)\|}\right)^{1/k}.$$
    Hence, for such $\xi$, the vector series $E_\Phi(x,\xi)$ converges absolutely in $\mathbb R^n$ and therefore converges.
\end{proof}
\end{proposition}

Let us now consider the Lipschitz condition for $\Phi$ as well as $\Phi(0)=0$ in order to show that, under particular requirements, we can obtain better convergence results. Note, however, that the example at the beginning of this section does not satisfy these properties.

\begin{proposition}\label{prop:convexp}
 Let $\Phi\colon\Omega\subset\mathbb R^n\to\Omega$ be a Lipschitz function such that $\Phi(0)=0$ and consider the series \eqref{eq:expxi}. Then, for every $\xi\in\mathbb R$ the series converges absolutely and uniformly in the compact subsets of $\Omega$. Furthermore, $E_{\Phi}(\cdot,\xi)$ is Lipschitz with $\|E(\cdot,\xi)\|_{Lip}\leq e^{|\xi|\|\Phi\|_{Lip}}$. 
    \begin{proof}
        Recall that the Lipschitz seminorm of $f\colon\Omega\to\mathbb R^n$ is
        $$\|f\|_{Lip}=\sup\left\{\frac{\|f(x)-f(y)\|}{\|x-y\|}:x,y\in\Omega, \, x\neq y\right\},$$
        and $f$ is said to be Lipschitz if $\|f\|_{Lip}<\infty$. This seminorm is submultiplicative under composition, that is, $\|f\circ g\|_{Lip}\leq\|f\|_{Lip}\|g\|_{Lip}$. By induction this gives $\|\Phi^{(k)}\|_{Lip}\leq \|\Phi\|_{Lip}^k$ for all $k\in\mathbb N$.

        Fix $\xi\in\mathbb R$ and note that, for $x\in\Omega$, holds
        $$\|\Phi^{(k)}(x)\|=\|\Phi^{(k)}(x)-\Phi^{(k)}(0)\|+\|\Phi^{(k)}(0)\|\leq \|\Phi^{(k)}\|_{Lip}\|x\|\leq  \|\Phi\|_{Lip}^k\|x\|$$
        for all $k=0,1,\ldots$, and hence $$\left\|\frac{\xi^k}{k!}\Phi^{(k)}(x)\right\|\leq \frac{|\xi|^k}{k!}\|\Phi\|_{Lip}^k\|x\|=:M_k.$$ Since the series of $M_k$ is convergent to $\|x\|e^{|\xi|\|\Phi\|_{Lip}}$, by the Weierstrass $M$-criterion the series \eqref{eq:expxi} is absolutely and uniformly convergent in the compact subsets of $\Omega$. Also, by similar arguments it can be obtained
        $$\|E_\Phi(\cdot,\xi)\|_{Lip}\leq\sum_{k=0}^\infty \frac{|\xi|^k}{k!}\|\Phi^{(k)}\|_{Lip}\leq\sum_{k=0}^\infty \frac{|\xi|^k}{k!}\|\Phi\|_{Lip}^k=e^{|\xi|\|\Phi\|_{Lip}}.$$
        Consequently, $E_\Phi(\cdot,\xi)$ is a Lipschitz function such that $\|E_\Phi(\cdot,\xi)\|_{Lip}\leq e^{|\xi|\|\Phi\|_{Lip}}$.
    \end{proof}
\end{proposition}
Clearly, the condition $\Phi(0)=0$ is not necessary to get the Lipschitz property for $E_\Phi(\cdot,\xi).$
However, this requirement is neither necessary to obtain the convergence results. For any $x_0\in\Omega$ (not necessarily a fixed point) and $\Phi$ such that $\|\Phi\|_{Lip}\neq 1,$ it can be deduced that
$$\|\Phi^{(k)}(x_0)\|\leq \frac{1-\|\Phi\|_{Lip}^k}{1-\|\Phi\|_{Lip}}\|\Phi(x_0)-x_0\|+\|x_0\|,$$
for each $k$. In this case, the term $M_k$ becomes
$$M_k=\frac{|\xi|^k}{k!}\left(\|\Phi\|_{Lip}^k\|x-x_0\|+\frac{1-\|\Phi\|_{Lip}^k}{1-\|\Phi\|_{Lip}}\|\Phi(x_0)-x_0\|+\|x_0\|\right),$$
and $$\sum_{k=0}^\infty M_k=\|x-x_0\|e^{|\xi|\|\Phi\|_{Lip}}+\frac{e^{|\xi|}-e^{|\xi|\|\Phi\|_{Lip}}}{1-\|\Phi\|_{Lip}}\|\Phi(x_0)-x_0\|+\|x_0\|e^{|\xi|}.$$
Therefore, by the Weierstrass $M$-criterion we deduce again the convergence results. These are also valid for the case $\|\Phi\|_{Lip}=1$, which we omit due to its straightforwardness. We left assumption $\Phi(0)=0$ in Proposition \ref{prop:convexp} because it has been considered throughout the paper and for the sake of simplicity.

The diagonalization tools developed in this paper facilitate the calculation of the series $E_\Phi$. Indeed, for a diagonal function $\Psi=(\Psi_1,\ldots,\Psi_n)$ observe that $$(\Psi\circ\Psi)(x)=\Psi(\Psi_1(x_1),\ldots,\Psi_n(x_n))=(\Psi_1^{(2)}(x_1),\ldots,\Psi_n^{(2)}(x_n)),$$ and in general $\Psi^{(k)}(x)=(\Psi_1^{(k)}(x_1),\ldots,\Psi_n^{(k)}(x_n))$. So, if the series $E_{\Psi}$ is convergent for some $\xi\in\mathbb R$, then
\begin{align*}
    E_{\Psi}(x,\xi)&=\sum_{k=0}^\infty \frac{\xi^k}{k!}\Psi^{(k)}(x)\\&= \sum_{k=0}^\infty \frac{\xi^k}{k!}(\Psi_1^{(k)}(x_1),\ldots,\Psi_n^{(k)}(x_n))\\
    &=\left(\sum_{k=0}^\infty\frac{\xi^k}{k!}\Psi_1^{(k)}(x_1),\ldots,\sum_{k=0}^\infty\frac{\xi^k}{k!}\Psi_n^{(k)}(x_n)\right)\\ &=(E_{\Psi_1}(x_1,\xi),\ldots,E_{\Psi_n}(x_n,\xi))
\end{align*}
and hence $E_\Psi(\cdot,\xi)$ is also diagonal. Furthermore, if $\Phi=P\circ \Psi\circ P^{-1}$, then $\Phi^{(k)}=P\circ\Psi^{(k)}\circ P^{-1}$ and therefore
\begin{align*}
    E_\Phi(x,\xi)&=\sum_{k=0}^\infty \frac{\xi^k}{k!}\Phi^{(k)}(x)\\ & =\sum_{k=0}^\infty \frac{\xi^k}{k!}(P\circ\Psi^{(k)}\circ P^{-1})(x)\\
    & = P\sum_{k=0}^\infty \frac{\xi^k}{k!}\Psi^{(k)}(P^{-1}x)=PE_\Psi(P^{-1}x,\xi).
\end{align*}

An important feature of the introduced exponential expressions is that they are solutions to a wide range of differential equations characterised by functions $\Phi$ that may be not linear. Therefore, these differential equations can be easily solved as long as such functions $\Phi$ can be diagonalized.

\begin{proposition}\label{prop:solde}
Consider $\Phi\colon\Omega\subset\mathbb R^n\to\Omega$ and $\Gamma\colon\Omega\times I\to\mathbb R^n$, where $I\subset \mathbb R$ is an open interval such that $\Gamma(x,\cdot)$ is derivable in $I$ for any $x\in\Omega$. If $E_{\Phi}$ is uniformly convergent in $\Omega$ (for example when $\Phi$ is Lipschitz), one solution for the differential equation \begin{equation}\label{eq:deexp}
    \frac{\partial\,\Gamma(x,\xi)}{\partial\,\xi}=\Gamma(\Phi(x),\xi).
\end{equation}
    is given by $\Gamma(x,\xi)=E_{\Phi}(x,\xi).$
    \begin{proof}
    Fix $x\in\Omega$ and note that $\xi\mapsto c_k(\xi):=\frac{\xi^k}{k!}\Phi^{(k)}(x)$ is differentiable in $I$ with continuous derivatives for each $k=0,1,\ldots$, since $$\frac{\partial c_k(\xi)}{\partial\xi}=\frac{\xi^{k-1}}{(k-1)!}\Phi^{(k)}(x)$$
    for $k\ge 1$ and $$\frac{\partial c_0(\xi)}{\partial\xi}=\frac{\partial x}{\partial \xi}=0.$$
Moreover, note that
$$\sum_{k=0}^\infty \frac{\partial c_k(\xi)}{\partial \xi}=\sum_{k=1}^\infty \frac{\xi^{k-1}}{(k-1)!}\Phi^{(k)}(x)=\sum_{k=0}^\infty \frac{\xi^k}{k!}\Phi^{(k)}(\Phi(x))= E_{\Phi}(\Phi(x),\xi),$$
which is uniformly convergent by hypotesis. Consequently, we can derive $E_{\Phi}(x,\cdot)$ term by term to conclude
\begin{align*}
    \frac{\partial}{\partial\xi} E_\Phi(x,\xi)=\frac{\partial}{\partial\xi}\sum_{k=0}^\infty c_k(\xi) =\sum_{k=0}^\infty \frac{\partial c_k(\xi)}{\partial \xi}= E_{\Phi}(\Phi(x),\xi).
\end{align*}
    \end{proof}
\end{proposition}

\begin{example}
    For the diagonal function $\Psi(s,t)=(\Psi_1(s),\Psi_2(t))=(2s,1/t)$ considered in the Example \ref{ex:compex} we consider the differential equation defined in \eqref{eq:deexp}. The solution given by Proposition \ref{prop:solde} is $E_{\Psi}(s,t,\xi)$, which can be computed as $(E_{\Psi_1}(s,\xi),E_{\Psi_2}(t,\xi))$, where
    \begin{align*}
        E_{\Psi_1}(s,\xi)&=\sum_{k=0}^\infty \frac{\xi^k}{k!}\Psi_1^{(k)}(s)=\sum_{k=0}^\infty \frac{\xi^k}{k!}2^ks=e^{2\xi}s,\\
        E_{\Psi_2}(t,\xi)&=\sum_{k=0}^\infty \frac{\xi^k}{k!}\Psi_2^{(k)}(t)=\sum_{k=0}^\infty \frac{\xi^k}{k!}t^{(-1)^k}\\
        &=\sum_{k=0}^\infty \frac{\xi^{2k}}{2k!}t+\sum_{k=0}^\infty \frac{\xi^{2k+1}}{(2k+1)!}\frac{1}{t}\\
        &=t\cosh(\xi)+\tfrac{1}{t}\sinh(\xi).
    \end{align*}
    Let us check that we have indeed found a solution. First, we observe that
    $$\frac{\partial E_\Psi(s,t,\xi)}{\partial\xi}=(2e^{2\xi}s, t\sinh(\xi)+\tfrac{1}{t}\cosh(\xi)).$$
    On the other hand,
    $$E_\Psi(\Psi(s,t),\xi)=E_\Psi(2s,1/t,\xi)=(2se^{2\xi},\tfrac{1}{t}\cosh(\xi)+t\sinh(\xi)),$$
    and therefore $$\frac{\partial E_\Psi(s,t,\xi)}{\partial\xi}=E_\Psi(s,t,\xi).$$
\end{example}

\begin{example}
    Consider the differential equation \eqref{eq:deexp} given by the function defined in \eqref{eq:ex}. To compute the solution given by Proposition \ref{prop:solde}, recall that $\Phi$ is diagonalizable in the basis $\{(1,1),(1,2)\}$ with diagonal expression $\Psi(s,t)=(\Psi_1(s),\Psi_2(t))=(2s,1/t)$. Then, the desired solution is
    \begin{align*}
        E_\Phi(x,y,\xi)&=PE_{\Psi}(2x-y,-x+y,\xi)\\
        &=P\begin{pmatrix}
           e^{2\xi} (2x-y)\\
            (-x+y)\cosh(\xi)+\tfrac{1}{-x+y}\sinh(\xi)
        \end{pmatrix}\\
        &=\begin{pmatrix}
            e^{2\xi}(2x-y) + (-x+y)\cosh(\xi)+\tfrac{1}{-x+y}\sinh(\xi)\\
            e^{2\xi}(2x-y)+2(-x+y)\cosh(\xi)+\tfrac{2}{-x+y}\sinh(\xi)
        \end{pmatrix}.
    \end{align*}
\end{example}

\vspace{0.4cm}

\subsection{Solving systems of differential equations}
 As we have shown in the previous, the tools we have developed can be useful in the field of differential equations. The following examples illustrate how to diagonalize nonlinear functions that model classical systems of nonlinear differential equations, and how to derive solutions to these problems.

Consider derivable functions $x(t),y(t)$ that satisfy the autonomous system
$$\begin{cases}
\frac{dx}{dt}=2e^{-x+y}-x+2y,\\
\frac{dy}{dt}=e^{-x+y}-x+2y.
\end{cases}$$
It can be written as $\frac{dX}{dt}=\Phi(X)$, being $X(t):=(x(t),y(t))$ and $\Phi$ the function defined in \eqref{eq:phiex}. As we have seen in Example \ref{ex:diagex}, this function can be diagonalized in the basis $\{(2,1),(1,1)\}$. In particular, its diagonal expression is $\Psi(p,q):=(e^{-p},q)$, with $p=x-y$ and $q=-x+2y$. Thus, the original systems is transformed to
$$\begin{cases}
\frac{dp}{dt}=e^{-p}\\
\frac{dq}{dt}=q,
\end{cases}$$
than can be easily solved as two separated ODEs. For the second one, we obtain $q(t)=K_2e^t$. The first is also a separable equation with solution $p(t)=\log(t+K_1)$. So, the solutions for $x$ and $y$ are $x(t)=2\log(t+K_1)+K_2e^t$ and $y(t)=\log(t+K_1)+K_2e^t$. Indeed, note that
\begin{align*}
    \frac{dx}{dt}&=\frac{2}{t+K_1}+K_2e^t=2e^{-\log(t+K_1)}+K_2e^t=2e^{-x+y}-x+2y,\\
    \frac{dy}{dy}&=\frac{1}{t+K_1}+K_2e^t=e^{-\log(t+K_1)}+K_2e^t=e^{-x+y}-x+2y.
\end{align*}

On the other hand, in some contexts is useful to invert the function $\Phi$, see \cite[Section 6.2.1]{soong1974}. In our example the diagonal function $\Psi$ can be easily inverted as $\Psi^{-1}(p,q)=(-\log(p),q)$, and so \begin{align*}
    \Phi^{-1}(x,y)&=(P\circ \Psi^{-1}\circ P^{-1})(x,y)\\
    &=(P\circ\Psi^{-1})(x-y,-x+2y)\\
    &=P\begin{pmatrix}
        -\log(x-y)\\ -x+2y
    \end{pmatrix}=\begin{pmatrix}
        -2\log(x-y)-x+2y\\ -\log(x-y)-x+2y
    \end{pmatrix}.
\end{align*}

\subsection{Study of dynamical systems}
To finish, let us show how diagonalization can be applied to study the equilibrium points of dynamical systems. Consider the following system:
    $$\begin{cases}
        \frac{dx}{dt}=2(2x-y)^2+\sin(-3x+2y),\\
        \frac{dy}{dt}= 3(2x-y)^2+2\sin(-3x+2y).
    \end{cases}$$
      
We can diagonalize the function $\Phi$ when defined in the basis $\{(2,3),(1,2)\}$ as $\Psi(p,q)=(p^2,\sin(q))$, where $p=2x-y$ and $q=-3x+2y$. In this new form, the system becomes
    $$\begin{cases}
        \frac{dp}{dt}=p^2,\\
        \frac{dq}{dt}= \sin(q),
    \end{cases}$$
    and so its equilibrium points are of the form $(p,q)=(0,k\pi)$ with $k\in\mathbb Z$. Consequently, $(x,y)=(k\pi, 2k\pi)$ are the equilibrium points of the original system.



\section*{Statements and Declarations}
The first author thanks the support of Generalitat Valenciana (Spain), grant number PROMETEO 2024 CIPROM/2023/32.
The third author was supported by a contract of the Programa de Ayudas de Investigación y Desarrollo (PAID-01-24), Universitat Politècnica de València. The fourth author was supported by  the R\&D\&I project/grant PID2022-138342NB-I00 funded by MCIN/ AEI/10.13039/501100011033/  (Spain)

\printbibliography

\end{document}